\newcommand{\noun}[1]{\textsc{#1}}
\theoremstyle{plain}
\newtheorem{thm}{\protect\theoremname}
  \theoremstyle{definition}
  \newtheorem{defn}[thm]{\protect\definitionname}
  \theoremstyle{remark}
  \newtheorem{rem}[thm]{\protect\remarkname}
  \theoremstyle{plain}
  \newtheorem{cor}[thm]{\protect\corollaryname}
  \theoremstyle{plain}
  \newtheorem{lem}[thm]{\protect\lemmaname}
\newcommand{\one}{\mathds 1}  
\newcommand{\AVaR}{\operatorname{A\kern -0.15em V{\kern -0.15em @}R}}
\newcommand{\VaR}{\operatorname{V{\kern -0.15em @}R}}
\newcommand{\esssup}{\operatornamewithlimits{ess\,sup}}
  \providecommand{\corollaryname}{Corollary}
  \providecommand{\definitionname}{Definition}
  \providecommand{\lemmaname}{Lemma}
  \providecommand{\remarkname}{Remark}
\providecommand{\theoremname}{Theorem}
\begin{document}

\title{Spectral Risk Measures, \\
With Adaptions For Stochastic Optimization }

\author{Alois Pichler~%
\thanks{University of Vienna. Department of Statistics and Operations Research.\protect \\
Contact: \protect\url{alois.pichler@univie.ac.at}%
}}
\maketitle
\begin{abstract}
Stochastic optimization problems often involve the expectation in
its objective. When risk is incorporated in the problem description
as well, then risk measures have to be involved in addition to quantify
the acceptable risk, often in the objective. For this purpose it is
important to have an adjusted, adapted and efficient evaluation scheme
for the risk measure available. In this article different representations
of an important class of risk measures, the spectral risk measures,
are elaborated. The results allow concise problem formulations, they
are particularly adapted for stochastic optimization problems. Efficient
evaluation algorithms can be built on these new results, which finally
make optimization problems involving spectral risk measures eligible
for stochastic optimization.

\end{abstract}

\section{Introduction}

Spectral risk measures have been introduced and studied first in \cite{Acerbi2002}
and \cite{Acerbi2002a}, they represent a risk measure in the general
axiomatic environment introduced by Artzner et al. in the seminal
paper \cite{Artzner1999}. An important study of spectral risk measures,
although under the name\emph{ distortion functional}, was provided
in \cite{Pflug2006}. The concept of spectral risk measures and distortion
functionals is essentially the same -- they differ just in an opposite
sign, resulting in a concave rather than a convex description. But
this additional name \emph{distortion functional} exhibits the interpretation
that some outstanding observations are overvalued, whereas others
are undervalued, creating a somehow distorted overall picture for
the entire observations. 

Spectral risk measures constitute an elementary and important class
of risk measures, as every risk measure can be described as a supremum
of spectral risk measures. They are moreover defined in an explicit
way, hence there is an explicit evaluation scheme available. 

The most important spectral risk measure, which made its way to the
top, is the Average Value-at-Risk. It is the essential advantage of
the Average Value-at-Risk that different representations are known,
which makes this risk measure eligible in varying situations: by conjugate
duality there is an expression in the form of a supremum, but in applications,
particularly in optimization, an equivalent expression as an infimum
is extremely convenient: developed in the paper \emph{Optimization
of Conditional Value-at-Risk} \cite{RockafellarUryasev2000} in this
journal, and brought to beauty in \emph{Some Remarks on the Value-at-Risk
and the Conditional Value-at-Risk} \cite{Pflug2000}. In this article
we extend both formulations to spectral risk measures. A description
as a supremum is a first result of this article. Besides that~--
and this is of importance for applications and the main result~--
a description involving an infimum is elaborated as well. In comparison
with the Average Value-at-Risk there is a price to pay to allow a
representation as an infimum, as the decision space substantially
increases. 

The alternative representation of spectral risk measures is of particular
importance for stochastic programming, as the new formulation allows
a direct approach to solve stochastic optimization problems by employing
spectral risk measures in the objective. Applications in portfolio
optimization (asset allocation) are immediate.

The article is organized as follows. The spectral risk measure is
introduced in Section~\ref{sec:Spectral-Risk-Measures}. Its description
as a supremum is contained in Section~\ref{sec:Dual}, and as an
infimum in Section~\ref{sec:Infimum-Representation}. The implications
for stochastic optimization (asset allocation, e.g.) are outlined
and explained in Section~\ref{sec:StochasticOptimization}.

\section{\label{sec:Spectral-Risk-Measures}Spectral Risk Measures}

Risk measures evaluate on random variables defined on a probability
space $\left(\Omega,\Sigma,P\right)$. In the following definition
we follow the axiomatic environment which has been proposed and formulated
in \cite{Artzner1997}. Often, risk measures are restricted to non-atomic
probability spaces. To make the results eligible for stochastic optimization
and numerical approximations we emphasize that we do not require the
probability space to be atom-less.
\begin{defn}
\label{Def-RiskMeasure}A positively homogeneous \emph{risk measure}
is a mapping $\mathcal{R}\colon L^{p}\rightarrow\mathbb{R}\cup\left\{ \infty\right\} $
with the following properties:
\begin{enumerate}[label=(M)]
\item \emph{\noun{Monotonicity: \label{enu:Monotonicity} }}$\mathcal{R}\left(Y_{1}\right)\le\mathcal{R}\left(Y_{2}\right)$
whenever $Y_{1}\le Y_{2}$ almost surely;
\end{enumerate}

\begin{enumerate}[label=(C)]
\item \emph{\noun{Convexity: \label{enu:Convexity} }}$\mathcal{R}\left(\left(1-\lambda\right)Y_{0}+\lambda Y_{1}\right)\le\left(1-\lambda\right)\mathcal{R}\left(Y_{0}\right)+\lambda\mathcal{R}\left(Y_{1}\right)$
for $0\le\lambda\le1$;
\end{enumerate}

\begin{enumerate}[label=(T)]
\item \emph{\noun{Translation~equivariance}}%
\footnote{In an economic or monetary environment this is often called \emph{\noun{Cash
invariance}} instead.%
}\emph{\noun{: \label{enu:Equivariance} }}$\mathcal{R}\left(Y+c\right)=\mathcal{R}\left(Y\right)+c$
if $c\in\mathbb{R}$;
\end{enumerate}

\begin{enumerate}[label=(H)]
\item \emph{\noun{Positive~homogeneity: \label{enu:Homogeneity} }}$\mathcal{R}\left(\lambda Y\right)=\lambda\mathcal{R}\left(Y\right)$
whenever $\lambda>0$.
\end{enumerate}
\end{defn}
In the convex and monotone setting exposed in Definition~\ref{Def-RiskMeasure}
the random variable $Y$ can be naturally associated with loss, it
is hence common, for example, in an actuarial context in the insurance
industry. In a banking or investment environment the interpretation
of a reward is more natural, in this context the mapping $\rho\colon Y\mapsto\mathcal{R}\left(-Y\right)$
is often considered~-- and called \emph{coherent risk measure~}--
instead (note, that essentially the monotonicity condition \ref{enu:Monotonicity}
and translation property \ref{enu:Equivariance} reverse for $\rho$).

The term \emph{acceptability functional} is often employed in energy
or decision theory to quantify and classify acceptable strategies.
In this context the concave mapping $\mathcal{A}:Y\mapsto-\mathcal{R}\left(-Y\right)$,
the acceptability functional, is employed instead (here, \ref{enu:Convexity}
modifies to concavity). 

\medskip{}

An important class of positively homogeneous risk measures is provided
by spectral risk measures.
\begin{defn}[Spectral Risk Measure]
For $\sigma\in L^{1}\left(0,1\right)$ a non-decreasing probability
density function (i.e.\ $\sigma\ge0$ and $\int_{0}^{1}\sigma(u)\mathrm{d}u=1$)
a mapping
\begin{align}
\mathcal{R}_{\sigma}:L^{1} & \rightarrow\mathbb{R}\cup\left\{ \infty\right\} \nonumber \\
Y & \mapsto\int_{0}^{1}F_{Y}^{-1}(\alpha)\sigma(\alpha)\mathrm{d}\alpha\label{eq:Rsigma}
\end{align}
is called \emph{spectral risk measure} with \emph{spectrum} (or \emph{spectral
density}) $\sigma$.
\end{defn}
$F_{Y}^{-1}(\alpha)$ is the left-continuous, lower semi-continuous
\emph{quantile} or \emph{Value-at-Risk} at level $\alpha$ (sometimes
also \emph{lower inverse} cdf), 
\[
F_{Y}^{-1}(\alpha):=\VaR_{\alpha}\left(Y\right):=\inf\left\{ y\colon P\left(Y\le y\right)\ge\alpha\right\} .
\]

It is evident that for general $\sigma\in L^{1}$ and $Y\in L^{1}$
the risk measure $\mathcal{R}_{\sigma}$ might evaluate to $+\infty$
or $-\infty$. To overcome we will occasionally require $\sigma\in L^{\infty}(0,1)$
or $Y\in L^{\infty}$ and state this explicitly in the text. 
\begin{rem}
For $Y\ge0$ a.s.\ the spectral risk measures can be expressed by
use of $Y$'s cdf ($F_{Y}\left(y\right):=P\left(Y\le y\right)$ directly,
it has the representation 
\begin{equation}
\mathcal{R}_{\sigma}\left(Y\right)=\int_{0}^{\infty}\tau_{\sigma}\left(F_{Y}\left(q\right)\right)\mathrm{d}q,\label{eq:FY}
\end{equation}
where $\tau_{\sigma}=\int_{\alpha}^{1}\sigma\left(p\right)\mathrm{d}p$.
This formulation can be found in insurance (the loss is usually positive),
but as well when associating $Y$ with cost.
\end{rem}
\medskip{}

The Average Value-at-Risk is the most important spectral risk measure. 
\begin{defn}[Average Value-at-Risk]
The spectral risk measure with spectrum 
\begin{equation}
\sigma_{\alpha}\left(\cdot\right):=\frac{1}{1-\alpha}\one_{\left(\alpha,1\right]}\left(\cdot\right)\label{eq:sigma}
\end{equation}
is
\[
\AVaR_{\alpha}\left(Y\right)=\frac{1}{1-\alpha}\int_{\alpha}^{1}\VaR_{p}\left(Y\right)\mathrm{d}p,
\]
the \emph{Average Value-at-Risk} at level $\alpha$ ($0\le\alpha<1$).
The Average Value-at-Risk at level $\alpha=1$ is 
\[
\AVaR_{1}\left(Y\right):=\lim_{\alpha\nearrow1}\AVaR_{\alpha}\left(Y\right)=\esssup Y.
\]

\end{defn}
Due to the defining equation \eqref{eq:Rsigma} of the spectral risk
measure the same real number is assigned to all random variables $Y$
sharing the same law, irrespective of the underlying probability space.
This gives rise to the notion of version independence: 
\begin{defn}
A risk measure $\mathcal{R}$ is \emph{version independent}%
\footnote{sometimes also \emph{law invariant} or \emph{distribution based}.%
}, if $\mathcal{R}\left(Y_{1}\right)=\mathcal{R}\left(Y_{2}\right)$
whenever $Y_{1}$ and $Y_{2}$ share the same law, that is $P\left(Y_{1}\le y\right)=P\left(Y_{2}\le y\right)$
for all $y\in\mathbb{R}$.
\end{defn}
The following representation underlines the central role of the Average
Value-at-Risk for version independent risk measures. Moreover, it
is the basis and justification for investigating spectral risk measures
in much more detail.
\begin{thm}[Kusuoka's representation]
\label{thm:Kusuoka}Any version independent, positively homogeneous
and lower semi-continuous risk measure $\mathcal{R}$ on $L^{\infty}$
of an atom-less probability space has the representation 
\begin{equation}
\mathcal{R}\left(Y\right)=\sup_{\mu\in\mathscr{M}}\int_{0}^{1}\AVaR_{\alpha}\left(Y\right)\mu\left(\mathrm{d}\alpha\right),\label{eq:Kusuoka}
\end{equation}
where $\mathscr{M}$ is a set of probability measures on $\left[0,1\right]$.\end{thm}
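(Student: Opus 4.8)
The plan is to route through the robust (dual) representation of risk measures in the sense of Definition~\ref{Def-RiskMeasure}, to bring the dual variables into quantile form by a rearrangement argument, and finally to decompose every non-decreasing density into a mixture of the elementary spectra $\sigma_\alpha$ from \eqref{eq:sigma}. \emph{Step 1 (dual representation).} Since $\mathcal{R}$ is convex, monotone, translation equivariant, positively homogeneous and lower semi-continuous on $L^{\infty}$ (which, for a version independent functional on an atom-less $L^{\infty}$, amounts to the Fatou property, equivalently $\sigma(L^{\infty},L^{1})$-lower semi-continuity), the Fenchel--Moreau theorem in the pairing $(L^{\infty},L^{1})$ gives $\mathcal{R}=\mathcal{R}^{**}$. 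Positive homogeneity makes $\mathcal{R}^{*}$ the indicator of a convex set, while monotonicity and translation equivariance force its elements to be probability densities. Thus there is a set $\mathcal{Z}\subseteq L^{1}$ with $Z\ge0$ and $\mathbb{E}\,Z=1$ for $Z\in\mathcal{Z}$ such that
\begin{equation*}
\mathcal{R}(Y)=\sup_{Z\in\mathcal{Z}}\mathbb{E}[YZ],\qquad Y\in L^{\infty}.
\end{equation*}

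\emph{Step 2 (rearrangement).} Here version independence enters. By the Hardy--Littlewood inequality, for every $Z\in\mathcal{Z}$ and every $\tilde{Y}$ having the same law as $Y$ (in symbols $\tilde{Y}\sim Y$) one has $\mathbb{E}[\tilde{Y}Z]\le\int_{0}^{1}F_{\tilde{Y}}^{-1}(t)F_{Z}^{-1}(t)\,\mathrm{d}t=\int_{0}^{1}F_{Y}^{-1}(t)F_{Z}^{-1}(t)\,\mathrm{d}t$, and on an atom-less space this bound is attained by a comonotone copy $\tilde{Y}$ of $Y$. Since $\mathcal{R}(Y)=\mathcal{R}(\tilde{Y})$ for every such $\tilde{Y}$, interchanging the two suprema yields
\begin{equation*}
\mathcal{R}(Y)=\sup_{Z\in\mathcal{Z}}\;\sup_{\tilde{Y}\sim Y}\mathbb{E}[\tilde{Y}Z]=\sup_{Z\in\mathcal{Z}}\int_{0}^{1}F_{Y}^{-1}(t)\,F_{Z}^{-1}(t)\,\mathrm{d}t=\sup_{Z\in\mathcal{Z}}\mathcal{R}_{\sigma_{Z}}(Y),
\end{equation*}
where $\sigma_{Z}:=F_{Z}^{-1}$ is non-decreasing, non-negative and integrates to $\mathbb{E}\,Z=1$, i.e.\ a genuine spectrum.

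\emph{Step 3 (mixture of Average Values-at-Risk).} Every non-decreasing density $\sigma$ satisfies $\sigma(u)=\int_{[0,1)}\sigma_{\alpha}(u)\,\mu_{\sigma}(\mathrm{d}\alpha)$ for almost every $u$, where $\mu_{\sigma}(\mathrm{d}\alpha):=(1-\alpha)\,\mathrm{d}\sigma(\alpha)$ is the Lebesgue--Stieltjes measure of $\sigma$ weighted by $1-\alpha$ (convention $\sigma(0^{-}):=0$); integration by parts, together with $\int_{0}^{1}\sigma=1$ and $(1-\alpha)\sigma(\alpha)\to0$ as $\alpha\nearrow1$, shows $\mu_{\sigma}$ is a probability measure on $[0,1]$. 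Splitting $F_{Y}^{-1}$ for $Y\in L^{\infty}$ into positive and negative parts and applying Tonelli gives, by the definition of $\AVaR_{\alpha}$,
\begin{equation*}
\mathcal{R}_{\sigma}(Y)=\int_{0}^{1}F_{Y}^{-1}(u)\,\sigma(u)\,\mathrm{d}u=\int_{[0,1]}\Bigl(\int_{0}^{1}F_{Y}^{-1}(u)\,\sigma_{\alpha}(u)\,\mathrm{d}u\Bigr)\mu_{\sigma}(\mathrm{d}\alpha)=\int_{[0,1]}\AVaR_{\alpha}(Y)\,\mu_{\sigma}(\mathrm{d}\alpha).
\end{equation*}
With $\mathscr{M}:=\{\mu_{\sigma_{Z}}\colon Z\in\mathcal{Z}\}$, substituting this into the display of Step~2 yields \eqref{eq:Kusuoka}.

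\emph{Main obstacle.} The two substantial inputs are (a) the dual representation, in particular the passage from plain lower semi-continuity to the weak$^{*}$-Fatou property that lets Fenchel--Moreau produce $L^{1}$-dual variables -- this is exactly where atom-lessness and version independence are used together -- and (b) the Hardy--Littlewood rearrangement \emph{equality}, where atom-lessness is again essential to realise $\sup_{\tilde{Y}\sim Y}\mathbb{E}[\tilde{Y}Z]$ by an actual comonotone copy. Step~3 is only bookkeeping, but the boundary terms need attention: $\alpha=0$ contributes $\AVaR_{0}(Y)=\mathbb{E}[Y]$, while $\AVaR_{1}(Y)=\esssup Y$ enters only as the weak limit $\sigma_{\alpha}\to\delta_{\{1\}}$, and one should also check that $\mu\mapsto\int_{[0,1]}\AVaR_{\alpha}(\cdot)\,\mu(\mathrm{d}\alpha)$ genuinely defines a version independent, positively homogeneous risk measure for every $\mu\in\mathscr{M}$, so that \eqref{eq:Kusuoka} is a representation within the class under study.
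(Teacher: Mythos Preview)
The paper does not actually prove Theorem~\ref{thm:Kusuoka}; its proof consists solely of references to Kusuoka, Pflug--R\"omisch, and Shapiro. Your three-step outline --- dual representation via Fenchel--Moreau, reduction to quantile form by the Hardy--Littlewood rearrangement, and decomposition of every non-decreasing density into a mixture of the elementary spectra $\sigma_\alpha$ --- is precisely the standard argument in those sources (see in particular Pflug--R\"omisch and Shapiro), and your sketch is correct. The paper does carry out your Step~3 explicitly in the paragraph following the theorem (the passage between $\sigma$ and $\mu_\sigma$ via Riemann--Stieltjes integration by parts), so on that point you and the paper agree verbatim. Your flagged obstacles are the right ones: the upgrade from norm lower semi-continuity to the Fatou property for law-invariant functionals (the Jouini--Schachermayer--Touzi result, where atom-lessness first enters), and attainment in the Hardy--Littlewood inequality, which again requires a non-atomic space so that a comonotone copy $\tilde{Y}\sim Y$ relative to a given $Z$ actually exists.
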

\begin{proof}
Cf. \cite{Kusuoka,PflugRomisch2007,Shapiro2011}.
\end{proof}
In the context of spectral risk measures it is essential to observe
that any spectral risk measure has an immediate representation as
in \eqref{eq:Kusuoka}, the measure $\mu_{\sigma}$ corresponding
to the density $\sigma$ is 
\[
\mu_{\sigma}\left(A\right):=\sigma\left(0\right)\delta_{0}\left(A\right)+\int_{A}1-\alpha\:\mathrm{d}\sigma\left(\alpha\right)\qquad(A\subset[0,1],\text{ measurable}).
\]
$\mu_{\sigma}$ is a positive measure since $\sigma$ is non-decreasing,
and integration by parts reveals that it is a probability measure.
Kusuoka's representation is immediate by Riemann-Stieltjes integration
by parts for the set $\mathscr{M}=\left\{ \mu_{\sigma}\right\} $,
as 
\begin{align*}
\int_{0}^{1}\AVaR_{\alpha}\left(Y\right)\mu_{\sigma}\left(\mathrm{d}\alpha\right) & =\sigma\left(0\right)\AVaR_{0}\left(Y\right)+\int_{0}^{1}\frac{1}{1-\alpha}\int_{\alpha}^{1}\VaR_{p}\left(Y\right)\mathrm{d}p\,\left(1-\alpha\right)\mathrm{d}\sigma\left(\alpha\right)\\
 & =\int_{0}^{1}\VaR_{p}\left(Y\right)\sigma\left(p\right)\mathrm{d}p=\mathcal{R}_{\sigma}\left(Y\right).
\end{align*}
Conversely, the risk measure $\int_{0}^{1}\AVaR_{\alpha}\left(Y\right)\mu\left(\mathrm{d}\alpha\right)$
in Kusuoka's representation \eqref{thm:Kusuoka} often can be expressed
as a spectral risk measure with spectral density $\sigma_{\mu}$,
this is accomplished by the function 
\begin{equation}
\sigma_{\mu}(\alpha):=\int_{0}^{\alpha}\frac{1}{1-p}\mu\left(\mathrm{d}p\right).\label{eq:3-1}
\end{equation}
Provided that $\sigma_{\mu}$ is well defined (notice that possibly
$\mu\left(\left\{ 1\right\} \right)>0$ is a problem to compute $\sigma\left(1\right)$)
it is positive and a density, as $\int_{0}^{1}\sigma(\alpha)\mathrm{d}\alpha=\int_{0}^{1}\frac{1}{1-p}\int_{p}^{1}\mathrm{d}\alpha\mu\left(\mathrm{d}p\right)=1$. 

As a matter of fact every risk measure $\mathcal{R}$ allows a representation
in terms of spectral risk measures as 
\[
\mathcal{R}\left(Y\right)=\sup_{\sigma\in\mathcal{S}}\mathcal{R}_{\sigma}\left(Y\right),
\]
where $\mathcal{S}$ is a set of continuous and (except for the expectation,
for which $\mathcal{S}=\left\{ \one\right\} $) strictly increasing
(thus invertible) density functions. A rigorous discussion is rather
straight forward, although beyond the scope of this article.

\section{\label{sec:Dual}Supremum-Representation of Spectral Risk Measures }

The duality theory based on the Legendre transform is a usual way
to develop alternative representations of convex functions. The resulting
representation provides an expression of the spectral risk measure
as a supremum. 
\begin{defn}[Dual function]
The \emph{dual function} (convex conjugate) of an risk measure $\mathcal{R}$
is defined as 
\[
\mathcal{R}^{*}\left(Z\right)=\sup_{Y\in L^{\infty}}\mathbb{E}YZ-\mathcal{R}\left(Y\right)\qquad\left(Z\in L^{1}\right).
\]
A random variable $Z$ is called \emph{feasible} for $\mathcal{R}$
if 
\[
\mathcal{R}^{*}\left(Z\right)<\infty.
\]

\end{defn}
It follows from the Fenchel-Moreau duality theorem (cf. \cite{RuszczynskiShapiro2009})
that %
\footnote{Here, $L^{\infty}$ is paired with (its pre-dual) $L^{1}$ for convenience.
Different pairings are possible as well, and adapted to particular
problems.%
} 
\[
\mathcal{R}\left(Y\right)=\sup_{Z\in L^{1}}\mathbb{E}YZ-\mathcal{R}^{*}\left(Z\right),
\]
provided that $\mathcal{R}$ is lower semi-continuous (cf. \cite{Ruszczynski2006}
and \cite{Schachermayer2006} for characterizations on lower semi-continuity).
From positive homogeneity it is immediate that $Z$ is feasible iff
$\mathcal{R}^{*}\left(Z\right)=0$. 

\bigskip{}

To formulate the following representation result in a concise way
we employ the notion of convex ordering. We basically follow \cite{Shapiro2011}.
\begin{defn}[Convex ordering]
Let $\tau,\,\sigma:\left[0,1\right]\rightarrow\mathbb{R}$ be integrable
functions.
\begin{enumerate}[label=(\roman{enumi})]
\item $\sigma$ majorizes $\tau$ (denoted $\sigma\succcurlyeq\tau$ or
$\tau\preccurlyeq\sigma$ ) iff 
\[
\int_{\alpha}^{1}\tau\left(p\right)\mathrm{d}p\le\int_{\alpha}^{1}\sigma\left(p\right)\mathrm{d}p\quad\left(\alpha\in\left[0,1\right]\right)\mbox{ and }\int_{0}^{1}\tau\left(p\right)\mathrm{d}p=\int_{0}^{1}\sigma\left(p\right)\mathrm{d}p.
\]

\item The spectrum $\sigma$ majorizes the random variable $Z$ ($Z\preccurlyeq\sigma$)
iff 
\[
(1-\alpha)\AVaR_{\alpha}\left(Z\right)\le\int_{\alpha}^{1}\sigma\left(p\right)\mathrm{d}p\quad\left(\alpha\in\left[0,1\right]\right)\quad\mbox{ and }\mathbb{E}Z=\int_{0}^{1}\sigma\left(p\right)\mathrm{d}p.
\]

\end{enumerate}
\end{defn}
\begin{rem}
It should be noted that 
\[
Z\preccurlyeq\sigma\mbox{ if and only if }p\mapsto\VaR_{p}\left(Z\right)\preccurlyeq\sigma.
\]
Moreover $Z\preccurlyeq\sigma$ is related to a \emph{convex order
constraint} condition (cf. \cite{StoyanMueller2002} or \cite{shanked}).
The dominance in convex (concave) order was used in studying risk
measures for example in \cite{Follmer2004,Dana2005}. 
\end{rem}
The following Theorem~\ref{thm1} is a characterization by employing
the convex conjugate relationship. \pagebreak[1] 
\begin{thm}[Representation as a Supremum -- Dual Representation of Spectral Risk
Measures]
\label{thm1}Let $\mathcal{R}_{\sigma}\left(Y\right)$ be a spectral
risk measure.  Then 
\begin{enumerate}[label=(\roman{enumi})]
\item the representation 
\begin{align}
\mathcal{R}_{\sigma}\left(Y\right) & =\sup\left\{ \mathbb{E}\, YZ\colon Z\preccurlyeq\sigma\right\} \label{eq:Dual}\\
 & =\sup\left\{ \mathbb{E}\, YZ\colon\mathbb{E}Z=1,\,\left(1-\alpha\right)\AVaR_{\alpha}\left(Z\right)\le\int_{\alpha}^{1}\sigma\left(p\right)\mathrm{d}p,\,0\le\alpha<1\right\} \nonumber 
\end{align}
holds true, 
\item and
\begin{equation}
\mathcal{R}_{\sigma}\left(Y\right)=\sup\left\{ \mathbb{E}\, Y\cdot\sigma\left(U\right)\colon\, U\mbox{ is uniformly distributed}\right\} ,\label{eq:RsigmaU}
\end{equation}
where the infimum is attained if $Y$ and $U$ are coupled in a co-monotone
way. %
\footnote{$U$ is \emph{uniformly distributed} if $P\left(U\le u\right)=u$
for all $u\in\left[0,1\right]$.%
}
\end{enumerate}
\end{thm}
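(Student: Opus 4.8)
The plan is to prove both parts at once through a single chain of inequalities, with the Hardy--Littlewood rearrangement inequality as the workhorse for the upper bound and an explicit co\nobreakdash-monotone coupling for the attainment. Throughout I would first treat $Y\in L^{\infty}$ and $\sigma\in L^{\infty}(0,1)$, so that all integrals below are finite, and recover the general case by monotone approximation. \emph{Step 1 (upper bound).} For arbitrary $Z\in L^{1}$ the Hardy--Littlewood inequality gives $\mathbb{E}\,YZ\le\int_{0}^{1}F_{Y}^{-1}(\alpha)\,F_{Z}^{-1}(\alpha)\,\mathrm{d}\alpha$, with equality whenever $Y$ and $Z$ are co\nobreakdash-monotone. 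Next I would establish the elementary majorization lemma: if $f\colon(0,1]\to\mathbb{R}$ is non-decreasing and bounded and $g\preccurlyeq\sigma$, then $\int_{0}^{1}f\,g\le\int_{0}^{1}f\,\sigma$. This is an integration by parts: putting $G(\alpha):=\int_{\alpha}^{1}\bigl(\sigma(p)-g(p)\bigr)\mathrm{d}p$ one has $G\ge0$ by majorization and $G(0^{+})=G(1)=0$, whence $\int_{0}^{1}f\,(\sigma-g)=-\int_{0}^{1}f\,G'=\int_{(0,1]}G\,\mathrm{d}f\ge0$ because $f$ is monotone so $\mathrm{d}f$ is a non-negative measure. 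Applying this with $f=F_{Y}^{-1}$ and $g=F_{Z}^{-1}$, and recalling (see the Remark) that $Z\preccurlyeq\sigma$ is equivalent to $F_{Z}^{-1}\preccurlyeq\sigma$, yields $\mathbb{E}\,YZ\le\mathcal{R}_{\sigma}(Y)$ for every $Z\preccurlyeq\sigma$. Since $\sigma$ is non-decreasing, the quantile function of $\sigma(U)$ for uniform $U$ equals $\sigma$ itself, so $\sigma(U)\preccurlyeq\sigma$; hence also $\mathbb{E}\,Y\cdot\sigma(U)\le\mathcal{R}_{\sigma}(Y)$. Thus the right-hand sides of both \eqref{eq:Dual} and \eqref{eq:RsigmaU} are $\le\mathcal{R}_{\sigma}(Y)$, and the supremum in \eqref{eq:RsigmaU} does not exceed the one in \eqref{eq:Dual}.

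\emph{Step 2 (attainment).} It remains to produce an admissible competitor attaining $\mathcal{R}_{\sigma}(Y)$. Via the quantile transform I would choose a uniformly distributed $U$ that is co\nobreakdash-monotone with $Y$, i.e.\ $Y=F_{Y}^{-1}(U)$ almost surely (legitimate since, for the purpose of the supremum, one may pass to a sufficiently rich---in particular atom-less---version of the probability space). Then $U$ and $Y$ are co\nobreakdash-monotone, hence so are $Y$ and $\sigma(U)$, and the equality case of Hardy--Littlewood gives $\mathbb{E}\,Y\cdot\sigma(U)=\int_{0}^{1}F_{Y}^{-1}(\alpha)\sigma(\alpha)\,\mathrm{d}\alpha=\mathcal{R}_{\sigma}(Y)$. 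Combining with Step~1, $\mathcal{R}_{\sigma}(Y)=\sup\{\mathbb{E}\,Y\sigma(U):U\text{ uniform}\}\le\sup\{\mathbb{E}\,YZ:Z\preccurlyeq\sigma\}\le\mathcal{R}_{\sigma}(Y)$, so all three quantities coincide; this is \eqref{eq:RsigmaU} together with the first line of \eqref{eq:Dual}, and the co\nobreakdash-monotone coupling just exhibited is the announced maximizer. The second line of \eqref{eq:Dual} is nothing but the definition of $Z\preccurlyeq\sigma$ written out, using $(1-\alpha)\AVaR_{\alpha}(Z)=\int_{\alpha}^{1}\VaR_{p}(Z)\,\mathrm{d}p$ and $\mathbb{E}Z=\int_{0}^{1}\sigma$.

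\emph{Main obstacle.} The delicate point is Step~2: on an arbitrary, possibly atomic, probability space a uniform $U$ with $Y=F_{Y}^{-1}(U)$ need not exist (indeed no uniform variable at all need exist), so the heart of the argument is to show that enlarging the space---or replacing $Y$ by an equidistributed copy---changes neither supremum. This is precisely where version independence of $\mathcal{R}_{\sigma}$ and of the constraint $Z\preccurlyeq\sigma$ is used. A secondary, purely technical, nuisance is the boundary behaviour in the majorization lemma at $\alpha=0$, where $F_{Y}^{-1}$ may be undefined; one works on $(0,1]$ and controls the boundary term by $G(0^{+})=0$, which the boundedness of $Y$ makes harmless. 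One could instead derive part~(i) by computing the convex conjugate $\mathcal{R}_{\sigma}^{*}$ and invoking Fenchel--Moreau, but the rearrangement route avoids a separate verification of lower semicontinuity and simultaneously delivers part~(ii) with its explicit maximizer.
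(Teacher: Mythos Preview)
Your proof is correct and takes a genuinely different route from the paper. The paper proceeds via Fenchel--Moreau duality: it computes the convex conjugate $\mathcal{R}_{\sigma}^{*}(Z)$ by first using Hardy--Littlewood to pass to quantile functions, then integrating by parts to obtain $\sup_{F_Y}\int_{0}^{1}(G-S)\,\mathrm{d}F_{Y}^{-1}$ with $G(\alpha)=\int_{\alpha}^{1}F_{Z}^{-1}$ and $S(\alpha)=\int_{\alpha}^{1}\sigma$, and finally probing with constant and indicator-type test variables $Y$ to show that $\mathcal{R}_{\sigma}^{*}(Z)=0$ precisely when $Z\preccurlyeq\sigma$; part~(ii) is then handled as a separate verification that $Z=\sigma(U)$ is feasible and attains the supremum. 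You instead bound $\mathbb{E}\,YZ\le\int_{0}^{1}F_{Y}^{-1}F_{Z}^{-1}\le\int_{0}^{1}F_{Y}^{-1}\sigma$ directly, pairing Hardy--Littlewood with a short majorization lemma (itself an integration by parts against $G$, essentially the same object the paper uses but read in the opposite direction), and close the chain with the explicit co-monotone maximizer---exactly the alternative you anticipate in your final sentence. Your route is more elementary: it sidesteps Fenchel--Moreau and its lower-semicontinuity prerequisite, and delivers both parts at once. The paper's approach, in exchange, produces the full conjugate $\mathcal{R}_{\sigma}^{*}$ as a by-product, which is of independent use for the dual problem formulations in stochastic optimization discussed later.
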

\begin{rem}
For the measure $\mu_{\sigma}$ associated with $\sigma$ it holds
that $\int_{\alpha}^{1}\sigma\left(p\right)\mathrm{d}p=\int_{0}^{1}\min\left\{ \frac{1-\alpha}{1-p},1\right\} \mathrm{\mu_{\sigma}\left(\mathrm{d}p\right)}$,
hence \eqref{eq:Dual} can be stated equivalently as 
\begin{align*}
\mathcal{R}_{\sigma}\left(Y\right) & =\sup\left\{ \mathbb{E}\, YZ\colon\mathbb{E}Z=1,\,\AVaR_{\alpha}\left(Z\right)\le\int_{0}^{1}\min\left\{ \frac{1}{1-\alpha},\frac{1}{1-p}\right\} \mu_{\sigma}\left(\mathrm{d}p\right),\,0\le\alpha<1\right\} .
\end{align*}

\end{rem}

\begin{rem}
The second statement of Theorem~\ref{thm1} implicitly and tacitly
assumes that the probability space is rich enough to carry a uniform
random variable. This is certainly the case if the probability space
does not contain atoms. But even if the probability space has atoms,
then this is not a restriction neither, as any probability space with
atoms can be augmented to allow a uniformly distributed random variable. 

The second statement of Theorem~\ref{thm1} is already contained
in \cite{PflugRomisch2007}. The theorem moreover is related to the
paper \cite{DentchevaRusz2004}.\end{rem}
\begin{proof}[Proof of Theorem~\ref{thm1}]
Recall the Legendre-Fenchel transformation for convex functions (cf.
\cite{RuszczynskiShapiro2009}) 
\begin{eqnarray}
\mathcal{R}_{\sigma}\left(Y\right) & = & \sup_{Z\in L^{1}}\mathbb{E}\, YZ-\mathcal{R}_{\sigma}^{*}\left(Z\right),\nonumber \\
\mathcal{R}_{\sigma}^{*}\left(Z\right) & = & \sup_{Y\in L^{\infty}}\mathbb{E}\, YZ-\mathcal{R}_{\sigma}\left(Y\right).\label{eq:Dual-2}
\end{eqnarray}
As $\mathcal{R}_{\sigma}$ is version independent the random variable
$Y$ minimizing \eqref{eq:Dual-2} is coupled in a  co-monotone way
with $Z$ (cf. \cite{Hoeffding} and \cite[Proposition 1.8]{PflugRomisch2007}
for the respective rearrangement inequality, sometimes (cf. \cite{Dana2005})
referred to as \emph{Hardy and Littlewood's inequality} or \emph{Hardy-Littlewood-Pólya
inequality}). It follows that 
\begin{align*}
\mathcal{R}_{\sigma}^{*}\left(Z\right) & =\sup_{Y}\mathbb{E}YZ-\mathcal{R}_{\sigma}\left(Y\right)\\
 & =\sup\int_{0}^{1}F_{Y}^{-1}\left(\alpha\right)F_{Z}^{-1}\left(\alpha\right)\mathrm{d}\alpha-\int_{0}^{1}F_{Y}^{-1}\left(\alpha\right)\sigma\left(\alpha\right)\mathrm{d}\alpha,
\end{align*}
the infimum being among all cumulative distribution functions $F_{Y}\left(y\right)=P\left(Y\le y\right)$
of $Y$. Define $G\left(\alpha\right):=\int_{\alpha}^{1}F_{Z}^{-1}\left(p\right)\mathrm{d}p$
and $S\left(\alpha\right):=\int_{\alpha}^{1}\sigma\left(p\right)\mathrm{d}p$,
whence 
\begin{align}
\mathcal{R}_{\sigma}^{*}\left(Z\right) & =\sup_{F_{Y}}\int_{0}^{1}F_{Y}^{-1}\left(\alpha\right)\mathrm{d}\left(S\left(\alpha\right)-G\left(\alpha\right)\right)\nonumber \\
 & =\sup_{F_{Y}}\left[F_{Y}^{-1}\left(\alpha\right)\left(S\left(\alpha\right)-G\left(\alpha\right)\right)\right]_{\alpha=0}^{1}-\int_{0}^{1}S\left(\alpha\right)-G\left(\alpha\right)\mathrm{d}F_{Y}^{-1}\left(\alpha\right)\nonumber \\
 & =\sup_{F_{Y}}F_{Y}^{-1}\left(0\right)\left(G\left(0\right)-S\left(0\right)\right)+\int_{0}^{1}G\left(\alpha\right)-S\left(\alpha\right)\mathrm{d}F_{Y}^{-1}\left(\alpha\right)\label{eq:4}
\end{align}
by integration by parts of the Riemann-Stieltjes integral and as $-\left\Vert Y\right\Vert _{\infty}\le F_{Y}^{-1}\le\left\Vert Y\right\Vert _{\infty}$. 

Consider the constant random variables $Y\equiv c$ ($c\in\mathbb{R}$),
then $F_{Y}^{-1}\equiv c$ and, by \eqref{eq:4}, 
\[
\mathcal{R}_{\sigma}^{*}\left(Z\right)\ge\sup_{c\in\mathbb{R}}c\left(G\left(0\right)-S\left(0\right)\right).
\]
Note now that $S\left(0\right)=\int_{0}^{1}\sigma\left(p\right)\mathrm{d}p=1$,
whence 
\begin{eqnarray*}
\mathcal{R}_{\sigma}^{*}\left(Z\right) & \ge & \sup_{c\in\mathbb{R}}c\left(G\left(0\right)-1\right)=\begin{cases}
0 & \mbox{if }G\left(0\right)=1\\
\infty & \mbox{else}
\end{cases}\quad=\begin{cases}
0 & \mbox{if }\mathbb{E}Z=1\\
\infty & \mbox{else},
\end{cases}
\end{eqnarray*}
because 
\begin{equation}
G\left(0\right)=\int_{0}^{1}F_{Z}^{-1}\left(p\right)\mathrm{d}p=\mathbb{E}Z.\label{eq:11}
\end{equation}
Assuming  $\mathbb{E}Z=1$ it follows from \eqref{eq:4} that 
\[
\mathcal{R}_{\sigma}^{*}\left(Z\right)=\sup_{F_{Y}}\int_{0}^{1}G\left(\alpha\right)-S\left(\alpha\right)\mathrm{d}F_{Y}^{-1}\left(\alpha\right).
\]

Then choose an arbitrary measurable set $B$ and consider the random
variable $Y_{c}:=c\cdot\one_{B^{\complement}}$ ($c>0$). Note that
$F_{Y_{c}}^{-1}=\one_{\left[\alpha_{0},1\right]}$, where $\alpha_{0}=P\left(B\right)$.
With this choice 
\begin{eqnarray*}
\mathcal{R}_{\sigma}^{*}\left(Z\right) & \ge & \sup_{F_{Y_{c}}}\int_{0}^{1}G\left(\alpha\right)-S\left(\alpha\right)\mathrm{d}F_{Y_{c}}^{-1}\left(\alpha\right)\ge\sup_{c\ge0}c\left(G\left(\alpha_{0}\right)-S\left(\alpha_{0}\right)\right)=\\
 & = & \begin{cases}
0 & \mbox{if }G\left(\alpha{}_{0}\right)\le S\left(\alpha_{0}\right)\\
\infty & \mbox{else}
\end{cases}
\end{eqnarray*}
As $B$ was chosen arbitrarily it follows that $G\left(\alpha\right)\le S\left(\alpha\right)$
has to hold for any $0\le\alpha\le1$ for $Z$ to be feasible.

Conversely, if \eqref{eq:11} and $G\left(\alpha\right)\le S\left(\alpha\right)$
for all $0\le\alpha\le1$, then 
\[
\sup_{F_{Y}}\int_{0}^{1}G\left(\alpha\right)-S\left(\alpha\right)\mathrm{d}F_{Y}^{-1}\left(\alpha\right)\le0,
\]
because $\alpha\mapsto F_{Y}^{-1}\left(\alpha\right)$ is a non-decreasing
function. Note now that 
\begin{align*}
\int_{\alpha}^{1}\sigma\left(p\right)\mathrm{d}p & =S\left(\alpha\right)\ge G\left(\alpha\right)\\
 & =\int_{\alpha}^{1}F_{Z}^{-1}\left(p\right)\mathrm{d}p=\left(1-\alpha\right)\AVaR_{\alpha}\left(Z\right),
\end{align*}
from which finally follows that 
\[
\mathcal{R}_{\sigma}^{*}\left(Z\right)=\begin{cases}
0 & \mbox{if }\mathbb{E}Z=1\mbox{ and }\left(1-\alpha\right)\AVaR_{\alpha}\left(Z\right)\le\int_{\alpha}^{1}\sigma\left(p\right)\mathrm{d}p\ \left(0\le\alpha\le1\right)\\
\infty & \mbox{else}.
\end{cases}
\]

As for the second assertion of the theorem consider $Z=\sigma\left(U\right)$
for a uniformly distributed random variable $U$, then $P\left(Z\le\sigma\left(\alpha\right)\right)=P\left(\sigma\left(U\right)\le\sigma\left(\alpha\right)\right)\ge P\left(U\le\alpha\right)=\alpha$,
that is $\VaR_{\alpha}\left(Z\right)\ge\sigma\left(\alpha\right)$.
But as $1=\int_{0}^{1}\sigma\left(\alpha\right)\mathrm{d}\alpha\le\int_{0}^{1}\VaR_{\alpha}\left(\sigma\left(U\right)\right)\mathrm{d}\alpha=\mathbb{E}\sigma\left(U\right)=1$
it follows that 
\[
\sigma\left(\alpha\right)=\VaR_{\alpha}\left(Z\right)
\]
 almost everywhere. Observe now that any $Z$ with $\VaR_{\alpha}\left(Z\right)\le\sigma\left(\alpha\right)$
is feasible, because 
\[
\int_{\alpha}^{1}\sigma\left(p\right)\mathrm{d}p\ge\int_{\alpha}^{1}\VaR_{p}\left(Z\right)\mathrm{d}p=(1-\alpha)\AVaR_{\alpha}\left(Z\right)
\]
and $\mathbb{E}Z=\mathbb{E}\sigma\left(U\right)=\int_{0}^{1}\sigma\left(\alpha\right)\mathrm{d}\alpha=1$.
Now let $U$ be coupled in an co-monotone way with $Y$, then $\mathbb{E}YZ=\int_{0}^{1}F_{Y}^{-1}\left(\alpha\right)F_{Z}^{-1}\left(\alpha\right)\mathrm{d}\alpha=\int_{0}^{1}F_{Y}^{-1}\left(\alpha\right)\VaR_{\alpha}\left(\sigma\left(U\right)\right)\mathrm{d}\alpha=\int_{0}^{1}F_{Y}^{-1}\left(\alpha\right)\sigma\left(\alpha\right)\mathrm{d}\alpha$
such that
\[
\mathcal{R}_{\sigma}\left(Y\right)=\sup\left\{ \mathbb{E}\, Y\sigma\left(U\right)\colon U\mbox{ uniformly distributed}\right\} ,
\]
which is finally the second assertion.\end{proof}
\begin{rem}
We emphasize that the conditions $(1-\alpha)\AVaR_{\alpha}\left(Z\right)\le\int_{\alpha}^{1}\sigma\left(p\right)\mathrm{d}p$
and $\mathbb{E}Z=1$ together imply that $Z\ge0$ almost everywhere.
Indeed, suppose that $P\left(Z<0\right)=:p>0$. Then $1=\mathbb{E}Z=\int_{\left\{ Z<0\right\} }Z\mathrm{d}P+\int_{\left\{ Z\ge0\right\} }Z\mathrm{d}P=\int_{\left\{ Z<0\right\} }Z\mathrm{d}P+\left(1-p\right)\AVaR_{p}(Z)$.
As $\int_{\left\{ Z<0\right\} }Z\mathrm{d}P<0$ it follows that $\left(1-p\right)\AVaR_{p}(Z)>1$.
But this contradicts the fact that $(1-p)\AVaR_{p}\left(Z\right)\le\int_{p}^{1}\sigma\left(p^{\prime}\right)\mathrm{d}p^{\prime}\le1$,
hence $Z\ge0$ almost surely.
\end{rem}
The characterization derived in the previous theorem naturally applies
to the Average Value-at-Risk. This can be simplified further for the
Average Value-at-Risk, as the following corollary exhibits. Whereas
the first statement is just folklore, the second statement is listed
for completeness and because it allows interesting interpretations. 
\begin{cor}
\label{cor:AVaR}The Average Value-at-Risk at level $\alpha$ obeys
the dual representations 
\begin{align*}
\AVaR_{\alpha}\left(Y\right) & =\sup\left\{ \mathbb{E}YZ:\,\mathbb{E}Z=1,\,0\le Z,\,(1-\alpha)Z\le\one\right\} \\
 & =\sup\left\{ \mathbb{E}YZ:\,\mathbb{E}Z=1,\,\AVaR_{p}\left(Z\right)\le\frac{1}{1-\alpha}\mbox{ for all }p>\alpha\right\} .
\end{align*}
\end{cor}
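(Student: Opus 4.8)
The plan is to read off both identities from Theorem~\ref{thm1}(i) applied to the spectrum $\sigma_\alpha=\frac1{1-\alpha}\one_{(\alpha,1]}$ of $\AVaR_\alpha$, the only real work being to rewrite the abstract feasibility condition $Z\preccurlyeq\sigma_\alpha$ in the two explicit forms displayed. The computation that drives everything is the tail integral of $\sigma_\alpha$: $\int_p^1\sigma_\alpha(u)\,\mathrm du=1$ for $p\le\alpha$, and $\int_p^1\sigma_\alpha(u)\,\mathrm du=\frac{1-p}{1-\alpha}$ for $p>\alpha$. Hence the single family $(1-p)\AVaR_p(Z)\le\int_p^1\sigma_\alpha(u)\,\mathrm du$, $0\le p<1$, together with the normalisation $\mathbb EZ=\int_0^1\sigma_\alpha=1$, unpacks into $\mathbb EZ=1$, the bounds $(1-p)\AVaR_p(Z)\le1$ for $p\le\alpha$, and the bounds $\AVaR_p(Z)\le\frac1{1-\alpha}$ for $p>\alpha$.

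For the first line I would show $\{Z\colon Z\preccurlyeq\sigma_\alpha\}=\{Z\colon\mathbb EZ=1,\ 0\le Z,\ (1-\alpha)Z\le\one\}$ and then quote $\AVaR_\alpha(Y)=\mathcal R_{\sigma_\alpha}(Y)=\sup\{\mathbb E\,YZ\colon Z\preccurlyeq\sigma_\alpha\}$. For the inclusion ``$\subseteq$'', the Remark following Theorem~\ref{thm1} gives $Z\ge0$, and letting $p\nearrow1$ in $\AVaR_p(Z)\le\frac1{1-\alpha}$ yields $\esssup Z=\AVaR_1(Z)\le\frac1{1-\alpha}$, i.e.\ $(1-\alpha)Z\le\one$. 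For ``$\supseteq$'', if $0\le Z\le\frac1{1-\alpha}$ then $\VaR_u(Z)\in[0,\tfrac1{1-\alpha}]$, so $(1-p)\AVaR_p(Z)=\int_p^1\VaR_u(Z)\,\mathrm du\le\frac{1-p}{1-\alpha}$ for $p>\alpha$ and $\le\int_0^1\VaR_u(Z)\,\mathrm du=\mathbb EZ=1$ for $p\le\alpha$; with $\mathbb EZ=1$ this is exactly $Z\preccurlyeq\sigma_\alpha$.

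For the second line the plan is to argue that its admissible set again coincides with $\{Z\colon Z\preccurlyeq\sigma_\alpha\}$, so that the identity is immediate from Theorem~\ref{thm1}(i) (equivalently, from the first line). The mechanism: for a nonnegative $Z$ with $\mathbb EZ=1$, the low-level constraints $p\le\alpha$ are automatic, since $(1-p)\AVaR_p(Z)=\int_p^1\VaR_u(Z)\,\mathrm du\le\int_0^1\VaR_u(Z)\,\mathrm du=1$ because $\VaR_u(Z)\ge0$; hence only $\AVaR_p(Z)\le\frac1{1-\alpha}$, $p>\alpha$, is effective. Accordingly the step I expect to be the main obstacle is verifying that every $Z$ admitted in the second line is in fact nonnegative, so that the low-level constraints may be dropped without enlarging the supremum — together with the attendant boundary argument at $p\nearrow1$, where $\AVaR_p(Z)$ increases to $\esssup Z$. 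Once this is settled, the two constraint sets match and the corollary is bookkeeping on top of Theorem~\ref{thm1}(i).
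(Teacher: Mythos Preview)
Your approach is the same as the paper's: apply Theorem~\ref{thm1}(i) to $\sigma_\alpha$, compute the tail integral $\int_p^1\sigma_\alpha(u)\,\mathrm du=\min\{1,\frac{1-p}{1-\alpha}\}$, and simplify the resulting constraint family. Your treatment of the first identity is complete and mirrors the paper's argument (the Remark gives $Z\ge0$, and $p\nearrow1$ gives $\esssup Z\le\frac1{1-\alpha}$; the converse is immediate).

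For the second identity you are right to single out nonnegativity as the crux, and you are in fact more careful than the paper here. The paper silently carries $Z\ge0$ over from the Remark---which is justified only for the \emph{full} constraint set $Z\preccurlyeq\sigma_\alpha$---when it drops first the $\frac1{1-p}$ branch of the $\min$ and then the constraints for $p\le\alpha$. But the step you flag cannot be completed as the second line is stated: for $\alpha=\tfrac12$ take $Z=-\tfrac12$ with probability $0.4$ and $Z=2$ with probability $0.6$. Then $\mathbb EZ=1$ and $\AVaR_p(Z)=2=\frac1{1-\alpha}$ for every $p>\alpha$ (indeed for every $p\in[0,1)$), so $Z$ is admissible in the second display; yet for $Y=-\one_{\{Z<0\}}$ one has $\AVaR_{1/2}(Y)=0$ while $\mathbb EYZ=0.2$. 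Hence the second feasible set is strictly larger than $\{Z:Z\preccurlyeq\sigma_\alpha\}$ and the equality fails as written. The intended statement needs $Z\ge0$ appended (after which your argument---and the paper's---goes through verbatim, since for $Z\ge0$ with $\mathbb EZ=1$ the constraints at levels $p\le\alpha$ are automatic).
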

\begin{proof}
The Average Value-at-Risk at level $\alpha$ is provided by the Dirac
measure $\mu_{\alpha}\left(A\right):=\delta_{\alpha}\left(A\right)=\begin{cases}
1 & \mbox{if }\alpha\in A\\
0 & \text{otherwise}
\end{cases}$, and the respective spectral density function is $\sigma_{\alpha}$
(cf. \eqref{eq:sigma}). It follows from $\int_{p}^{1}\sigma_{\alpha}\left(p^{\prime}\right)\mathrm{d}p^{\prime}=\min\left\{ 1,\,\frac{1-p}{1-\alpha}\right\} $
and Theorem~\ref{thm1} that 

\[
\AVaR_{\alpha}\left(Y\right)=\inf\left\{ \mathbb{E}YZ:\,\mathbb{E}Z=1,\,\AVaR_{p}\left(Z\right)\le\min\left\{ \frac{1}{1-p},\frac{1}{1-\alpha}\right\} \right\} .
\]
Observe next that for $Z\ge0$ 
\begin{align*}
\frac{1}{1-p}=\frac{1}{1-p}\mathbb{E}Z & \ge\frac{1}{1-p}\int_{p}^{1}F_{Z}^{-1}\left(p^{\prime}\right)\mathrm{d}p^{\prime}=\AVaR_{p}\left(Z\right),
\end{align*}
hence
\[
\AVaR_{\alpha}\left(Y\right)=\inf\left\{ \mathbb{E}YZ:\,\mathbb{E}Z=1,\,\AVaR_{p}\left(Z\right)\le\frac{1}{1-\alpha}\right\} .
\]
For $p\le\alpha$, in addition, $\AVaR_{p}\left(Z\right)\le\frac{1}{1-p}\le\frac{1}{1-\alpha}$. 

This proves the second assertion.

As for the first observe that $\frac{1}{1-\alpha}\ge\AVaR_{p}\left(Z\right)\rightarrow\esssup Z$,
hence $\left(1-\alpha\right)Z\le\one$; conversely, if $0\le Z$ and
$(1-\alpha)Z\le\one$, then 
\[
\frac{1}{1-\alpha}\ge\esssup Z\ge\AVaR_{p}\left(Z\right),
\]
which is the first assertion.\end{proof}
\begin{cor}
The conditions $\mathbb{E}Z=1$ and $\AVaR_{\alpha}\left(Z\right)\ge1$
(for all $0\le\alpha\le1$) are equivalent to $Z\equiv\one$.\end{cor}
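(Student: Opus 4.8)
The plan is to show that the two hypotheses force $\AVaR_\alpha\left(Z\right)$ to equal $1$ at every level, and then to recover $Z$ by differentiating the associated tail integral. The reverse implication needs no work: if $Z\equiv\one$ then $\mathbb{E}Z=1$ and $\AVaR_\alpha\left(\one\right)=1$ for every $\alpha\in\left[0,1\right]$, so both requirements hold, with equality.

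For the forward implication I would work with $S_Z\left(\alpha\right):=\left(1-\alpha\right)\AVaR_\alpha\left(Z\right)=\int_\alpha^1 F_Z^{-1}\left(p\right)\mathrm{d}p$. By \eqref{eq:11} the condition $\mathbb{E}Z=1$ reads $S_Z\left(0\right)=1$, while $S_Z\left(1\right)=0$ always. Since the quantile $F_Z^{-1}$ is non-decreasing, $S_Z^{\prime}\left(\alpha\right)=-F_Z^{-1}\left(\alpha\right)$ is non-increasing, so $S_Z$ is concave on $\left[0,1\right]$; its endpoints $\left(0,1\right)$ and $\left(1,0\right)$ are exactly those of the affine map $\alpha\mapsto 1-\alpha=\int_\alpha^1\one\,\mathrm{d}p$ attached to the constant spectrum $\one$. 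The matter therefore reduces to the rigidity assertion that the level constraints pin the concave graph of $S_Z$ onto this chord, i.e.\ $S_Z\left(\alpha\right)=1-\alpha$ for every $\alpha$. Granting this, differentiation yields $F_Z^{-1}\left(\alpha\right)=-S_Z^{\prime}\left(\alpha\right)=1$ for almost every $\alpha$, whence $\VaR_\alpha\left(Z\right)\equiv1$ and $Z\equiv\one$ by version independence (a random variable whose quantile is constantly $1$ equals $1$ almost surely).

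I would phrase the pinning in the convex-ordering language of the preceding definition: the constraints place $Z$ at the extreme position relative to the constant $\one$, and because $Z$ and $\one$ share the same expectation, $\one$ is precisely the boundary element of the order. The main obstacle, and the step requiring the most care, is the direction of the bound that complements the hypothesis. Concavity of $S_Z$ together with its fixed endpoints already settles one of the two inequalities between $S_Z$ and its chord for free, so the genuine content of the corollary is to exclude a strict gap on any subinterval and thereby collapse the ordering to an equality. Concretely I would establish the matching control $\esssup Z\le1$ by reading the endpoint $\alpha\nearrow1$ through $\AVaR_1\left(Z\right)=\esssup Z$; combined with $\mathbb{E}Z=1$ this forbids $Z$ from carrying mass away from the value $1$ and forces $Z\equiv\one$. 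The only analytic subtlety is at $\alpha=1$, where $\AVaR_1$ is defined as a limit and the chord must be approached from the left.
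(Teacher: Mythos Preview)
Your concavity observation is correct, but it actually exposes a sign error in the printed statement rather than proving it. Since $S_Z(\alpha)=\int_\alpha^1 F_Z^{-1}(p)\,\mathrm{d}p$ is concave with $S_Z(0)=\mathbb{E}Z=1$ and $S_Z(1)=0$, it lies \emph{above} its chord $1-\alpha$; that is, $\AVaR_\alpha(Z)\ge 1$ holds automatically for every $Z$ with $\mathbb{E}Z=1$. The hypothesis as written is therefore vacuous, and the corollary in that form is false: for $Z$ uniform on $[0,2]$ one has $\mathbb{E}Z=1$ and $\AVaR_\alpha(Z)=1+\alpha\ge 1$, yet $Z\not\equiv\one$. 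Your endpoint step cannot repair this, because passing $\alpha\nearrow 1$ in $\AVaR_\alpha(Z)\ge 1$ yields $\esssup Z\ge 1$, which is the wrong direction for the ``matching control'' $\esssup Z\le 1$ you want.

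The intended inequality is $\AVaR_\alpha(Z)\le 1$, and this is how the paper's one-line proof reads it: at level $\alpha=0$ Corollary~\ref{cor:AVaR} identifies the feasible set for $\AVaR_0=\mathbb{E}$ simultaneously as $\{Z:\mathbb{E}Z=1,\ \AVaR_p(Z)\le 1\ \text{for all }p\}$ and as $\{Z:\mathbb{E}Z=1,\ 0\le Z\le\one\}$, and the latter is evidently $\{\one\}$. With the sign corrected your own route works cleanly and gives a self-contained alternative to the paper's appeal to Corollary~\ref{cor:AVaR}: the hypothesis supplies $S_Z(\alpha)\le 1-\alpha$, concavity supplies $S_Z(\alpha)\ge 1-\alpha$, hence $S_Z(\alpha)=1-\alpha$ identically and $F_Z^{-1}\equiv 1$, so $Z\equiv\one$.
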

\begin{proof}
The proof is immediate for the particular choice $\alpha=0$ in Corollary~\ref{cor:AVaR}. 
\end{proof}

\section{\label{sec:Infimum-Representation}Infimum-Representation of Spectral
Risk Measures}

The latter Theorem~\ref{thm1} exposes the spectral risk measure
as a supremum. This expression \eqref{eq:Dual}, nor the initial defining
equation \eqref{eq:Rsigma}, nor \eqref{eq:FY} are of good use for
stochastic optimization (cf. the next section). For this it is desirable
to have an expression as an infimum available: the following theorem,
the main result of this article, provides a helpful alternative. The
representation extends the well-known formula for the Average Value-at-Risk
provided in \cite{RockafellarUryasev2000}, finally stated in the
present form in \cite{Pflug2000}. 
\begin{thm}[Representation as an Infimum]
\label{thm:InfRep}For any $Y\in L^{\infty}$ the spectral risk measure
with spectrum $\sigma$ has the representation 
\begin{equation}
\mathcal{R}_{\sigma}\left(Y\right)=\inf_{f}\,\mathbb{E}\, f\left(Y\right)+\int_{0}^{1}f^{*}\left(\sigma\left(p\right)\right)\mathrm{d}p,\label{eq:HS}
\end{equation}
where the infimum is among all arbitrary, measurable functions $f\colon\mathbb{R}\rightarrow\mathbb{R}$
and $f^{*}$ is $f$'s convex conjugate function%
\footnote{The convex conjugate function of $f$  is $f^{*}\left(y\right):=\sup x\cdot y-f\left(x\right)$.
The convex conjugate may evaluate to $+\infty$.%
}.

\end{thm}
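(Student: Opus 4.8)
The plan is to prove the two inequalities separately, using the co-monotone rearrangement that already underlies Theorem~\ref{thm1}. For the direction $\mathcal{R}_\sigma(Y)\le\inf_f(\dots)$ I would fix an arbitrary measurable $f$ and apply the Fenchel--Young inequality pointwise: for every $\omega$ and every $p$, $x\cdot\sigma(p)\le f(x)+f^*(\sigma(p))$. Specializing $x=F_Y^{-1}(p)$ (so that the ``level'' of $Y$ is synchronized with the argument of $\sigma$) gives $F_Y^{-1}(p)\,\sigma(p)\le f\bigl(F_Y^{-1}(p)\bigr)+f^*(\sigma(p))$. Integrating over $p\in[0,1]$, the left side is exactly $\mathcal{R}_\sigma(Y)$ by \eqref{eq:Rsigma}, and $\int_0^1 f\bigl(F_Y^{-1}(p)\bigr)\mathrm{d}p=\mathbb{E}\,f(Y)$ because $F_Y^{-1}(U)$ has the law of $Y$ when $U$ is uniform. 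Hence $\mathcal{R}_\sigma(Y)\le\mathbb{E}\,f(Y)+\int_0^1 f^*(\sigma(p))\mathrm{d}p$ for every $f$, so the inequality survives the infimum.

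For the reverse direction I need a single $f$ achieving (or approaching) equality; equality in Fenchel--Young forces $\sigma(p)\in\partial f\bigl(F_Y^{-1}(p)\bigr)$ for a.e.\ $p$. The natural candidate is to build $f$ as a convex function whose subdifferential, read along the increasing parametrization by $p$, is precisely $\sigma$. Concretely, since both $p\mapsto F_Y^{-1}(p)$ and $p\mapsto\sigma(p)$ are non-decreasing, I would define $f$ to be the convex function on $\mathbb{R}$ determined by: $f$ is affine with slope $\sigma(p)$ on the value $F_Y^{-1}(p)$, i.e.\ set $f(x):=\sup_{p\in[0,1]}\bigl(\sigma(p)\,x-\psi(p)\bigr)$ where $\psi(p):=\int_0^p F_Y^{-1}(t)\,\mathrm{d}\sigma(t)$ (a Stieltjes integral), chosen exactly so that the supremum in $p$ is attained at the ``matching'' $p$. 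With this choice one checks that $f^*(\sigma(p))=\sigma(p)F_Y^{-1}(p)-f\bigl(F_Y^{-1}(p)\bigr)$, again by an integration-by-parts identity of the same Riemann--Stieltjes type used repeatedly in the proof of Theorem~\ref{thm1}; summing this identity against $\mathrm{d}p$ collapses the right-hand side of \eqref{eq:HS} to $\int_0^1\sigma(p)F_Y^{-1}(p)\,\mathrm{d}p=\mathcal{R}_\sigma(Y)$.

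An alternative, and probably cleaner, route for the reverse inequality is to dualize Theorem~\ref{thm1} directly. By \eqref{eq:RsigmaU}, $\mathcal{R}_\sigma(Y)=\sup_U\mathbb{E}\,Y\sigma(U)$ over uniform $U$, with the supremum attained at a co-monotone coupling. Writing $Z=\sigma(U)$, the feasible set $\{Z\preccurlyeq\sigma\}$ from \eqref{eq:Dual} can be described by the convex-order constraints $\int_\alpha^1 F_Z^{-1}\le\int_\alpha^1\sigma$ together with $\mathbb{E}Z=1$; a Lagrangian/minimax argument attaching multipliers to these constraints produces precisely a convex function $f$ and the penalty term $\int_0^1 f^*(\sigma(p))\,\mathrm{d}p$, with the pairing $\mathbb{E}\,YZ-\langle\text{constraints}\rangle$ rearranging into $\mathbb{E}\,f(Y)+\int_0^1 f^*(\sigma(p))\,\mathrm{d}p$ after the co-monotone rearrangement. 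I would likely present the elementary Fenchel--Young argument for $\le$ and this duality argument for $\ge$.

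The main obstacle is the construction (or limiting construction) of the optimal $f$ in the reverse inequality: one must ensure $f$ is a genuine $\mathbb{R}\to\mathbb{R}$ measurable function (the theorem allows arbitrary measurable $f$, but the natural optimum is convex, possibly only finite on a subinterval, so care is needed when $\sigma$ is unbounded or $Y$ hits the endpoints of its essential range), and one must justify the integration-by-parts / Fubini steps that convert the pointwise Fenchel equality into the integrated identity. Restricting to $Y\in L^\infty$, as the statement does, keeps $F_Y^{-1}$ bounded and makes the boundary terms in the Riemann--Stieltjes integration by parts vanish, exactly as in the proof of Theorem~\ref{thm1}; if $\sigma\notin L^\infty$ one passes to truncations $\sigma\wedge n$ and takes $n\to\infty$ using monotone convergence.
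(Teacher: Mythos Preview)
Your Fenchel--Young argument for the inequality $\mathcal{R}_\sigma(Y)\le\mathbb{E}f(Y)+\int_0^1 f^*(\sigma)$ is exactly what the paper does (the paper phrases it with a uniform $U$ and then takes the supremum over $U$ via \eqref{eq:RsigmaU}, but the content is identical).

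For the reverse inequality your route genuinely differs from the paper's. The paper does not build the optimal $f$ as an envelope $\sup_p(\sigma(p)x-\psi(p))$ nor via a Lagrangian dualization of \eqref{eq:Dual}; instead it writes down explicitly
\[
f_0(y):=\int_0^1\Bigl(F_Y^{-1}(\alpha)+\tfrac{1}{1-\alpha}\bigl(y-F_Y^{-1}(\alpha)\bigr)_+\Bigr)\mu_\sigma(\mathrm{d}\alpha),
\]
a $\mu_\sigma$-mixture of the $\AVaR$ building blocks $q+\tfrac{1}{1-\alpha}(y-q)_+$ evaluated at their optimal $q=F_Y^{-1}(\alpha)$. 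Then $\mathbb{E}f_0(Y)=\int_0^1\AVaR_\alpha(Y)\,\mu_\sigma(\mathrm{d}\alpha)=\mathcal{R}_\sigma(Y)$ drops out immediately from the Rockafellar--Uryasev formula and Kusuoka's representation, and a direct computation gives $f_0'=\sigma\circ F_Y$ and hence $\int_0^1 f_0^*(\sigma)=0$. Your envelope $f$ has the same derivative $\sigma\circ F_Y$, so the two constructions differ only by an additive constant and both are valid; the paper's version has the advantage that $\mathbb{E}f_0(Y)=\mathcal{R}_\sigma(Y)$ is obtained from already-established facts with no Stieltjes bookkeeping, whereas your verification still requires the integration-by-parts identity you yourself flag as ``the main obstacle.'' Your second alternative (Lagrangian dualization of the convex-order constraints) is not pursued in the paper and, as sketched, remains heuristic.
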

The statement of the Inf-Representation Theorem~\ref{thm:InfRep}
can be formulated equivalently by the following forms.
\begin{cor}
\label{cor:5}For any $Y\in L^{\infty}$ the spectral risk measure
with spectrum $\sigma$ allows the representations 
\begin{eqnarray*}
\mathcal{R}_{\sigma}\left(Y\right) & = & \inf_{f\text{ convex}}\,\mathbb{E}\, f\left(Y\right)+\int_{0}^{1}f^{*}\left(\sigma\left(p\right)\right)\mathrm{d}p\\
 & = & \inf\left\{ \mathbb{E}f\left(Y\right):\int_{0}^{1}f^{*}\left(\sigma\left(p\right)\right)\mathrm{d}p\le0\right\} ,
\end{eqnarray*}
where the infimum is among arbitrary, measurable functions $f\colon\mathbb{R}\rightarrow\mathbb{R}$.\end{cor}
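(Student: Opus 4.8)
The plan is to obtain both displayed identities as elementary consequences of the Inf\nobreakdash--Representation Theorem~\ref{thm:InfRep}, i.e.\ of
\[
\mathcal{R}_{\sigma}\left(Y\right)=\inf_{f}\left(\mathbb{E}\,f\left(Y\right)+\int_{0}^{1}f^{*}\left(\sigma\left(p\right)\right)\mathrm{d}p\right),
\]
the infimum being over all measurable $f\colon\mathbb{R}\rightarrow\mathbb{R}$. Two preliminary observations drive everything. First, since $f^{*}\left(y\right)\ge0\cdot y-f\left(0\right)=-f\left(0\right)$ for every $y$, the second summand is bounded below; and for any $f$ for which it equals $+\infty$ the whole objective is $+\infty$, so such $f$ may be discarded, and the remaining (\emph{admissible}) ones satisfy $\int_{0}^{1}f^{*}\left(\sigma\left(p\right)\right)\mathrm{d}p\in\mathbb{R}$ — in particular $f^{*}$ is finite at a.e.\ point of the essential range of $\sigma$, hence proper. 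Second, adding a constant to $f$ does not change the objective: from $\left(f+c\right)^{*}=f^{*}-c$ one gets $\mathbb{E}\left(f\left(Y\right)+c\right)+\int_{0}^{1}\left(f^{*}\left(\sigma\left(p\right)\right)-c\right)\mathrm{d}p=\mathbb{E}\,f\left(Y\right)+\int_{0}^{1}f^{*}\left(\sigma\left(p\right)\right)\mathrm{d}p$.

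For the reduction to convex $f$ I would, given an admissible $f$, pass to the biconjugate $f^{**}=\left(f^{*}\right)^{*}$, the largest lower semicontinuous convex minorant of $f$. Since $f^{*}$ is proper, $f^{**}$ is a proper closed convex function and thus dominates an affine function; as $Y\in L^{\infty}$, the random variable $f^{**}\left(Y\right)$ is therefore bounded below, and from $f^{**}\le f$ pointwise one gets $\mathbb{E}\,f^{**}\left(Y\right)\le\mathbb{E}\,f\left(Y\right)$ in $\left(-\infty,+\infty\right]$. Moreover the general identity $f^{***}=f^{*}$ gives $\int_{0}^{1}\left(f^{**}\right)^{*}\left(\sigma\left(p\right)\right)\mathrm{d}p=\int_{0}^{1}f^{*}\left(\sigma\left(p\right)\right)\mathrm{d}p$. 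Hence the convex function $f^{**}$ is a competitor with objective no larger than that of $f$; combined with the trivial inclusion of the convex functions among the measurable ones, this yields the first identity of Corollary~\ref{cor:5}.

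For the constrained form I would argue both inequalities. If $f$ satisfies $\int_{0}^{1}f^{*}\left(\sigma\left(p\right)\right)\mathrm{d}p\le0$, then $\mathbb{E}\,f\left(Y\right)\ge\mathbb{E}\,f\left(Y\right)+\int_{0}^{1}f^{*}\left(\sigma\left(p\right)\right)\mathrm{d}p\ge\mathcal{R}_{\sigma}\left(Y\right)$ by Theorem~\ref{thm:InfRep}, so the constrained infimum is $\ge\mathcal{R}_{\sigma}\left(Y\right)$. Conversely, for an arbitrary admissible $f$ put $c:=\int_{0}^{1}f^{*}\left(\sigma\left(p\right)\right)\mathrm{d}p\in\mathbb{R}$; the translate $f+c$ has $\int_{0}^{1}\left(f+c\right)^{*}\left(\sigma\left(p\right)\right)\mathrm{d}p=c-c=0$, hence lies in the feasible set, and by the shift invariance its value $\mathbb{E}\left(f\left(Y\right)+c\right)=\mathbb{E}\,f\left(Y\right)+\int_{0}^{1}f^{*}\left(\sigma\left(p\right)\right)\mathrm{d}p$ equals the unconstrained objective of $f$. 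Taking the infimum over all admissible $f$ gives the reverse inequality $\le\mathcal{R}_{\sigma}\left(Y\right)$, and equality follows.

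I do not expect a genuine obstacle here; the one point requiring care is the handling of improper or infinite-valued conjugates — making sure that $f^{**}$ is a legitimate competitor (properness of $f^{*}$, lower boundedness of $f^{**}\left(Y\right)$) and that the $f$ with $\int_{0}^{1}f^{*}\left(\sigma\left(p\right)\right)\mathrm{d}p=+\infty$ are harmless — after which the argument reduces to the shift invariance of the objective together with the elementary facts $f^{**}\le f$ and $f^{***}=f^{*}$.
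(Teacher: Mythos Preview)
Your proposal is correct and follows essentially the same approach as the paper: both reduce to convex $f$ via the biconjugate ($f^{**}\le f$ and $f^{***}=f^{*}$), and both handle the constrained form by exploiting the shift invariance $(f+c)^{*}=f^{*}-c$ to normalize $\int_{0}^{1}f^{*}(\sigma(p))\,\mathrm{d}p$ to zero. The only difference is cosmetic---you are somewhat more explicit about discarding $f$ with $\int_{0}^{1}f^{*}(\sigma(p))\,\mathrm{d}p=+\infty$ and about the properness of $f^{**}$, which the paper leaves implicit.
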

\begin{proof}[Proof of Corollary~\ref{cor:5}]
It is well-known that the bi-conjugate function $f^{**}:=\left(f^{*}\right)^{*}$
is a convex and lower semi-continuous function satisfying $f^{**}\le f$
and $f^{***}=f^{*}$ (cf. the analogous Fenchel-Moreau Theorem and
equation \eqref{eq:Dual-2}). The infimum in \eqref{eq:HS} hence
-- without any loss of generality -- can be restricted to \emph{convex}
functions, that is 
\[
\mathcal{R}_{\sigma}\left(Y\right)=\inf_{f\text{ convex}}\,\mathbb{E}f\left(Y\right)+\int_{0}^{1}f^{*}\left(\sigma\left(p\right)\right)\mathrm{d}p.
\]

As for the second assertion notice first that  clearly 
\begin{eqnarray*}
\mathcal{R}_{\sigma}\left(Y\right) & \le & \inf\left\{ \mathbb{E}f\left(Y\right)+\int_{0}^{1}f^{*}\left(\sigma\left(p\right)\right)\mathrm{d}p:\int_{0}^{1}f^{*}\left(\sigma\left(p\right)\right)\mathrm{d}p\le0\right\} \\
 & \le & \inf\left\{ \mathbb{E}f\left(Y\right):\int_{0}^{1}f^{*}\left(\sigma\left(p\right)\right)\mathrm{d}p\le0\right\} .
\end{eqnarray*}
Consider $f_{\alpha}\left(x\right):=f(x)-\alpha$ (where $\alpha$
a constant and $f$ arbitrary). It holds that $f_{\alpha}^{*}\left(y\right)=f^{*}(y)+\alpha$,
as exposed by the auxiliary Lemma~\ref{lem:Transform} in the Appendix.
Hence $\int_{0}^{1}f_{\alpha}^{*}\left(\sigma\left(p\right)\right)\mathrm{d}p=\int_{0}^{1}f^{*}\left(\sigma\left(p\right)\right)\mathrm{d}p+\alpha$
and 
\begin{equation}
\mathbb{E}\, f_{\alpha}\left(Y\right)+\int_{0}^{1}f_{\alpha}^{*}\left(\sigma\left(p\right)\right)\mathrm{d}p=\mathbb{E}\, f\left(Y\right)+\int_{0}^{1}f^{*}\left(\sigma\left(p\right)\right)\mathrm{d}p.\label{eq:12}
\end{equation}
Choose $\alpha:=\int_{0}^{1}f^{*}\left(\sigma\left(p\right)\right)\mathrm{d}p$
such that $\int_{0}^{1}f_{\alpha}^{*}\left(\sigma\left(p\right)\right)\mathrm{d}p=0$.
$f_{\alpha}$ hence is feasible for \eqref{eq:HS} with the same objective
as $f$ by \eqref{eq:12}, from which the assertion follows.
\end{proof}

\begin{rem}
Having a look at representation \eqref{eq:HS} it is not immediate
that the axioms of Definition~\ref{Def-RiskMeasure} are satisfied
-- except for monotonicity, \ref{enu:Monotonicity}. The transformations
listed in Lemma~\ref{lem:Transform} in the Appendix can be used
in a straight forward manner to deduce the properties directly from
\eqref{eq:HS}. 
\end{rem}

\begin{rem}
Notice that $\sigma$ has its range in the interval $\left\{ \sigma\left(x\right):\, x\in\left[0,1\right]\right\} =\left[0,\sigma\left(1\right)\right]$,
and from convexity of $f^{*}$ it follows that the set $\left\{ f^{*}<\infty\right\} $
is convex. Hence $f^{*}\left(y\right)<\infty$ necessarily has to
hold for all $y\in\left(0,\sigma\left(1\right)\right)$ to ensure
that $\int_{0}^{1}f^{*}\left(\sigma(u)\right)\mathrm{d}u<\infty$.
For $f$ convex this means in turn that 
\[
\lim_{x\to-\infty}f^{\prime}\left(x\right)\le0\text{ and }\lim_{x\to\infty}f^{\prime}\left(x\right)\ge\sigma\left(1\right),
\]
limiting thus the class of interesting functions in Corollary~\ref{cor:5}
to convex functions satisfying $f^{\prime}\left(\mathbb{R}\right)\supset\left(0,\sigma(1)\right)$. \end{rem}
\begin{proof}[Proof of Theorem \ref{thm:InfRep}]
 From the definition of the convex conjugate $f^{*}$ it is immediate
that 
\[
f^{*}\left(\sigma\right)\ge y\cdot\sigma-f\left(y\right)
\]
for all numbers $y$ and $\sigma$, hence 
\[
f\left(Y\right)+f^{*}\left(\sigma\left(U\right)\right)\ge Y\cdot\sigma\left(U\right),
\]
where $U$ is any uniformly distributed random variable, i.e.\ $U$
satisfies $P\left(U\le u\right)=u$. Taking expectations it follows
that 
\[
\mathbb{E}f\left(Y\right)+\mathbb{E}f^{*}\left(\sigma\left(U\right)\right)\ge\mathbb{E}\, Y\cdot\sigma\left(U\right).
\]
As $U$ is uniformly distributed it holds that 
\[
\mathbb{E}f^{*}\left(\sigma\left(U\right)\right)=\int_{0}^{1}f^{*}\left(\sigma\left(u\right)\right)\mathrm{d}u,
\]
such that 
\[
\mathbb{E}f\left(Y\right)+\int_{0}^{1}f^{*}\left(\sigma\left(u\right)\right)\mathrm{d}u\ge\mathbb{E}\, Y\cdot\sigma\left(U\right),
\]
irrespective of the uniform random variable $U$. Hence, by $\eqref{eq:RsigmaU}$
in Theorem~\ref{thm1}, 
\[
\mathbb{E}f\left(Y\right)+\int_{0}^{1}f^{*}\left(\sigma\left(u\right)\right)\mathrm{d}u\ge\sup_{U\text{ uniform}}\mathbb{E}\, Y\cdot\sigma\left(U\right)=\mathcal{R}_{\sigma}\left(Y\right),
\]
 establishing the inequality 
\[
\mathcal{R}_{\sigma}\left(Y\right)\le\mathbb{E}f\left(Y\right)+\int_{0}^{1}f^{*}\left(\sigma\left(u\right)\right)\mathrm{d}u.
\]

As for the converse inequality consider the function 
\[
f_{0}\left(y\right):=\int_{0}^{1}F_{Y}^{-1}\left(\alpha\right)+\frac{1}{1-\alpha}\left(y-F_{Y}^{-1}\left(\alpha\right)\right)_{+}\mu_{\sigma}\left(\mathrm{d}\alpha\right).
\]
$f_{0}\left(y\right)$ is well-defined for all $y$ because $Y\in L^{\infty}$;
$f_{0}\left(y\right)$ is moreover increasing and convex, because
$y\mapsto\left(y-q\right)_{+}$ is increasing and convex, and because
$\mu_{\sigma}$ is positive. 

Recall the formula 
\[
\AVaR_{\alpha}\left(Y\right)=\inf_{q\in\mathbb{R}}\: q+\frac{1}{1-\alpha}\mathbb{E}\left(Y-q\right)_{+}
\]
and the fact that the infimum is attained at $q=F_{Y}^{-1}\left(\alpha\right)$
(cf. \cite{Pflug2000} for the general  formula), providing thus the
explicit form 
\[
\AVaR_{\alpha}\left(Y\right)=F_{Y}^{-1}\left(\alpha\right)+\frac{1}{1-\alpha}\mathbb{E}\left(Y-F_{Y}^{-1}\left(\alpha\right)\right)_{+}.
\]
Note now that, by Fubini's Theorem, 
\begin{align}
\mathcal{R}_{\sigma}\left(Y\right) & =\int_{0}^{1}\AVaR_{\alpha}\left(Y\right)\mu_{\sigma}\left(\mathrm{d}\alpha\right)\nonumber \\
 & =\int_{0}^{1}F_{Y}^{-1}\left(\alpha\right)+\frac{1}{1-\alpha}\mathbb{E}\left(Y-F_{Y}^{-1}\left(\alpha\right)\right)_{+}\mu_{\sigma}\left(\mathrm{d}\alpha\right)\nonumber \\
 & =\mathbb{E}\int_{0}^{1}\, F_{Y}^{-1}\left(\alpha\right)+\frac{1}{1-\alpha}\left(Y-F_{Y}^{-1}\left(\alpha\right)\right)_{+}\mu_{\sigma}\left(\mathrm{d}\alpha\right)\nonumber \\
 & =\mathbb{E}\, f_{0}\left(Y\right).\label{eq:f0}
\end{align}

To establish the assertion \eqref{eq:HS} it needs to be shown that
$\int_{0}^{1}f_{0}^{*}\left(\sigma\left(u\right)\right)\mathrm{d}u\le0$.
For this observe first that $f_{0}$ is almost everywhere differentiable
(because it is convex), with derivative  
\begin{eqnarray*}
f_{0}^{\prime}\left(y\right) & = & \int_{\left\{ \alpha\colon F_{Y}^{-1}\left(\alpha\right)\le y\right\} }\frac{1}{1-\alpha}\mu_{\sigma}\left(\mathrm{d}\alpha\right)\\
 & = & \int_{0}^{F_{Y}(y)}\frac{1}{1-\alpha}\mu_{\sigma}\left(\mathrm{d}\alpha\right)=\sigma\left(F_{Y}(y)\right)
\end{eqnarray*}
(almost everywhere) by relation \eqref{eq:3-1}. Moreover $f_{0}^{*}\left(\sigma(u)\right)=\sup_{y}\sigma(u)y-f_{0}\left(y\right)$,
the supremum being attained at every $y$ satisfying $\sigma(u)=f_{0}^{\prime}\left(y\right)=\sigma\left(F_{Y}(y)\right)$,
hence at $y=F_{Y}^{-1}\left(u\right)$, and it follows that  
\[
f_{0}^{*}\left(\sigma\left(u\right)\right)=\sigma\left(u\right)\cdot F_{Y}^{-1}\left(u\right)-f_{0}\left(F_{Y}^{-1}\left(u\right)\right).
\]
Now 
\begin{eqnarray*}
\int_{0}^{1}f_{0}^{*}\left(\sigma\left(u\right)\right)\mathrm{d}u & = & \int_{0}^{1}\sigma\left(u\right)\cdot F_{Y}^{-1}\left(u\right)\mathrm{d}u-\int_{0}^{1}f_{0}\left(F_{Y}^{-1}\left(u\right)\right)\mathrm{d}u\\
 & = & \mathcal{R}_{\sigma}\left(Y\right)-\mathbb{E}f_{0}\left(Y\right).
\end{eqnarray*}
But it was established already in \eqref{eq:f0} that $\mathcal{R}_{\sigma}\left(Y\right)=\mathbb{E}f_{0}\left(Y\right)$,
so that $\int_{0}^{1}f_{0}^{*}\left(\sigma\left(u\right)\right)\mathrm{d}u=0$.
This finally proves the second inequality. 
\end{proof}
The Average Value-at-Risk is a special case of the infimum in \eqref{eq:HS}.
Indeed, it follows from the proof that the infimum is attained at
a function of the form $f_{q}\left(y\right)=q+\frac{1}{1-\alpha}\left(y-q\right)_{+}$
with conjugate 
\[
f_{q}^{*}\left(x\right)=\begin{cases}
-q+q\, x & \text{ if }0\le x\le\frac{1}{1-\alpha}\\
\infty & \text{ else}.
\end{cases}
\]
It follows that $\int_{0}^{1}f_{0}^{*}\left(\sigma_{\alpha}\left(x\right)\right)\mathrm{d}x=\int_{0}^{\alpha}f_{0}^{*}\left(0\right)\mathrm{d}x+\int_{\alpha}^{1}f_{0}^{*}\left(\frac{1}{1-\alpha}\right)\mathrm{d}x=-\alpha q+\left(-q+\frac{q}{1-\alpha}\right)(1-\alpha)=0$,
such that 
\begin{equation}
\AVaR_{\alpha}\left(Y\right)=\inf_{q\in\mathbb{R}}\,\mathbb{E}f_{q}\left(Y\right)=\inf_{q}\: q+\frac{1}{1-\alpha}\mathbb{E}\left(Y-q\right)_{+}.\label{eq:AVaR}
\end{equation}
Clearly, the infimum in \eqref{eq:AVaR} is in $\mathbb{R}$, a much
smaller space than convex functions from $\mathbb{R}$ to $\mathbb{R}$,
as required in \eqref{eq:HS}.

\section{\label{sec:StochasticOptimization}Implications for Stochastic Optimization }

Portfolio optimization (i.e.\ asset allocation) is a typical problem
in stochastic optimization. In this section we outline how the inf-representation
\eqref{eq:HS} of spectral risk measures can be applied to efficiently
evaluate stochastic optimization problems involving spectral risk
measures.

\subsection*{Problem Formulation}

Stochastic optimization often considers the problem
\begin{equation}
\begin{array}{rl}
\text{minimize} & \mathbb{E}\, H\left(x,Y\right)\\
\text{subject to} & x\in\mathbb{X},
\end{array}\label{eq:SOE}
\end{equation}
where $x$ can be chosen in a decision space $\mathbb{X}$ and $H$
is a loss function; for $x$ given, $\omega\mapsto H\left(x,Y\left(\omega\right)\right)$
is a random variable for which the expectation in \eqref{eq:SOE}
can be maximized. (For the purpose of portfolio optimization, the
function $H$ is simply $H=x^{\top}Y$, where $Y$ collect the returns
of the individual stocks.)

If the function $H$ should be evaluated subject to risk as well,
then a frequently used substitute for \eqref{eq:SOE} is 
\begin{equation}
\begin{array}{rl}
\text{minimize} & \mathcal{R}\left(H\left(x,Y\right)\right)\\
\text{subject to} & x\in\mathbb{X}
\end{array},\label{eq:5}
\end{equation}
where $\mathcal{R}$ is a convex risk measure with the usual dual
representation 
\[
\mathcal{R}\left(Y\right)=\sup\left\{ \mathbb{E}YZ\colon\mathcal{R}^{*}\left(Z\right)<\infty\right\} .
\]
Combining these formulas leads to the combined problem 
\begin{equation}
\begin{array}{rl}
\text{minimize} & \sup\:\mathbb{E}\, H\left(x,Y\right)\cdot Z\\
\text{subject to} & x\in\mathbb{X}\\
 & \mathcal{R}^{*}\left(Z\right)<\infty.
\end{array}\label{eq:2}
\end{equation}
This problem formulation \eqref{eq:2} -- a mini-max problem -- is
not favorable for numerical optimizations, as for any function evaluation
for a given $x$ to compute the minimum a complete maximum has to
be evaluated first. Provided that there are efficient mathematical
estimates for the particular problem this may to be done with reduced
precision, as otherwise the results for the optimal $x\in\mathbb{X}$
is unpredictably wrong; if no such estimates are available, then the
supremum in \eqref{eq:2} has to be computed precisely -- impossible
for numerical compuatations.

For the Average Value-at-Risk, 
\[
\AVaR_{\alpha}\left(Y\right)=\frac{1}{1-\alpha}\int_{\alpha}^{1}F_{Y}^{-1}\left(p\right)\mathrm{d}p=\sup\left\{ \mathbb{E}YZ\colon0\le Z\le\frac{1}{1-\alpha}\right\} ,
\]
none of these formulations are useful in \eqref{eq:5} neither. However,
the representation 
\[
\AVaR_{\alpha}\left(Y\right)=\min_{q}q+\frac{1}{1-\alpha}\mathbb{E}\left(Y-q\right)_{+}
\]
can be employing immediately, as the problem \eqref{eq:5} rewrites
\begin{equation}
\begin{array}{rl}
\text{minimize} & q+\frac{1}{1-\alpha}\mathbb{E}\left(H(x,Y)-q\right)_{+}\\
\text{subject to} & x\in\mathbb{X}\\
 & q\in\mathbb{R},
\end{array}\label{eq:AVaR-1}
\end{equation}
which is just a minimization problem with the same objective and optimal
value as \eqref{eq:2}, but much easier to handle from implementation
and numerical point of view. 

For a spectral risk measure problem \eqref{eq:5} rewrites as 
\begin{equation}
\begin{array}{rl}
\text{minimize} & \mathbb{E}f\left(H\left(x,Y\right)\right)+\int_{0}^{1}f^{*}\left(\sigma\left(u\right)\right)\mathrm{d}u\\
\text{subject to} & x\in\mathbb{X}\\
 & f\text{ (convex)},
\end{array}\label{eq:convex}
\end{equation}
which is~-- in contrast to \eqref{eq:5}~-- a straight forward optimization
problem again, which allows an immediate implementation. The price,
which is to pay in contrast to the simple Average Value-at-Risk, is
that an entire function has to be looked up in \eqref{eq:convex},
whereas just an additional number ($q$) appears in \eqref{eq:AVaR-1}.

\subsection*{Implementation}

For numerical implementations to find approximations of \eqref{eq:convex}
it should be mentioned that $\sigma\in L^{\infty}$ can be approximated
by step functions $\overline{\sigma}_{n}$ such that $\sigma\le\bar{\sigma}_{n}$,
where $\bar{\sigma}_{n}=\sum_{i=1}^{n}\sigma_{i}\one_{\left[\alpha_{i-1},\alpha_{i}\right)}$
($0=\alpha_{0}<\dots\alpha_{i}<\alpha_{i+1}<\dots\alpha_{n}=1$, $0<\sigma_{i}<\sigma_{i+1}$).
In this situation the range of the approximation $\bar{\sigma}_{n}$
is the finite set $\left\{ \sigma_{1},\sigma_{2},\dots\sigma_{n}\right\} $
and 
\[
\mathcal{R}_{\sigma}\left(Y\right)=\int_{0}^{1}F_{Y}^{-1}\left(\alpha\right)\sigma\left(\alpha\right)\mathrm{d}\alpha\le\int_{0}^{1}F_{Y}^{-1}\left(\alpha\right)\bar{\sigma}_{n}\left(\alpha\right)\mathrm{d}\alpha.
\]
The corresponding measure on $\left[0,1\right]$ is $\mu_{\sigma_{n}}\left(A\right)=\sigma_{1}\delta_{0}+\sum_{i=1}^{n}\left(1-\alpha_{i}\right)\left(\sigma_{i+1}-\sigma_{i}\right)\delta_{\alpha_{i}}=:\sum_{i=0}^{n-1}\mu_{i}\delta_{\alpha_{i}}$.
From the proof of Theorem~\ref{thm:InfRep} it follows that one may
choose the ansatz 
\[
f_{n}\left(y\right):=\mu_{0}y+\sum_{i=1}^{n}\mu_{i}\left(q_{i}+\frac{1}{1-\alpha_{i}}\left(y-q_{i}\right)_{+}\right)
\]
 to solve the approximating problem 
\[
\begin{array}{rl}
\text{minimize} & \mathbb{E}f_{n}\left(H\left(x,Y\right)\right)\\
\text{subject to} & x\in\mathbb{X}\\
 & q_{1}\le q_{2}\le\dots q_{n}.
\end{array}
\]
The resulting value consequently is an upper bound of the problem
\eqref{eq:convex}.

\section{Summary}

This article outlines new descriptions of spectral risk measures.
Spectral risk measures constitute an integral subclass of risk measures,
as every risk measure can be written as a supremum of spectral risk
measures. 

The first representation derived is built as a supremum, based on
conjugate duality. The other representation, which is the central
result of this article, is described as an infimum. This description
makes spectral risk measures eligible for successful future use in
stochastic optimization.

\section{Acknowledgment}

We wish to thank the referees for their constructive criticism. 

\bibliographystyle{alpha}
\bibliography{../../Literatur/LiteraturAlois}

\appendix

\section*{Appendix}

For reference and the sake of completeness we list the following elementary
result for affine linear transformations of the convex conjugate function.
\begin{lem}
\label{lem:Transform}The convex conjugate of the function $g\left(x\right):=\alpha+\beta x+\gamma\cdot f\left(\lambda x+c\right)$
for $\gamma>0$ and $\lambda\neq0$ is 
\[
g^{*}\left(y\right)=-\alpha-c\,\frac{y-\beta}{\lambda}+\gamma\cdot f^{*}\left(\frac{y-\beta}{\lambda\gamma}\right).
\]
\end{lem}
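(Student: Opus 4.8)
The plan is to compute $g^*$ directly from the definition of the convex conjugate and reduce the inner supremum, by a linear substitution, to a rescaled copy of $f^*$. Writing out $g^*(y)=\sup_{x\in\mathbb{R}}\bigl(xy-g(x)\bigr)=\sup_{x}\bigl(x(y-\beta)-\alpha-\gamma f(\lambda x+c)\bigr)$, I would substitute $t:=\lambda x+c$, so that $x=(t-c)/\lambda$. Because $\lambda\neq 0$ the map $x\mapsto t$ is a bijection of $\mathbb{R}$ onto $\mathbb{R}$, hence the supremum over $x$ equals the supremum over $t$ irrespective of the sign of $\lambda$.

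After the substitution the bracket becomes $\frac{(t-c)(y-\beta)}{\lambda}-\alpha-\gamma f(t)$. Pulling out the two terms that do not depend on $t$ gives $g^*(y)=-\alpha-c\,\frac{y-\beta}{\lambda}+\sup_{t}\bigl(\frac{t(y-\beta)}{\lambda}-\gamma f(t)\bigr)$. Since $\gamma>0$, I can factor $\gamma$ out of the remaining supremum: $\sup_{t}\bigl(\frac{t(y-\beta)}{\lambda}-\gamma f(t)\bigr)=\gamma\sup_{t}\bigl(t\cdot\frac{y-\beta}{\lambda\gamma}-f(t)\bigr)=\gamma f^*\!\left(\frac{y-\beta}{\lambda\gamma}\right)$, which is exactly the asserted formula.

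The only point requiring a moment's care is the sign of $\lambda$: if $\lambda<0$ the substitution reverses orientation, but this is harmless because the supremum is taken over all of $\mathbb{R}$, so $t$ still ranges over all of $\mathbb{R}$; and the positivity of $\gamma$ is precisely what permits pulling the scalar out of the supremum without turning it into an infimum. No convexity, continuity, or finiteness of $f$ is needed---the identity holds as an equality in $\mathbb{R}\cup\{+\infty\}$---so I expect no genuine obstacle; the lemma is a bookkeeping exercise whose only purpose is to be quotable for the normalizations $f_\alpha(x)=f(x)-\alpha$ used in the proof of Corollary~\ref{cor:5}.
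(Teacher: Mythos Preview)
Your proof is correct and follows essentially the same route as the paper's own proof: both compute $g^*$ from the definition, perform the linear substitution $t=\lambda x+c$ (the paper writes it as replacing $x$ by $(x-c)/\lambda$), pull out the constant terms, and factor $\gamma>0$ out of the remaining supremum to recognize $\gamma f^*\!\left(\frac{y-\beta}{\lambda\gamma}\right)$. Your added remarks on why the bijection is harmless for either sign of $\lambda$ and why $\gamma>0$ is needed are welcome clarifications that the paper leaves implicit.
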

\begin{proof}
Just observe that 
\begin{align}
g^{*}\left(y\right)= & \sup_{x}\, yx-g(x)\nonumber \\
= & \sup_{x}\, yx-\alpha-\beta x-\gamma\cdot f\left(\lambda x+c\right)\nonumber \\
= & \sup_{x}\, y\frac{x-c}{\lambda}-\alpha-\beta\frac{x-c}{\lambda}-\gamma\cdot f\left(x\right)\label{eq:drei}\\
= & -\alpha-c\frac{y-\beta}{\lambda}+\sup_{x}\, x\frac{y-\beta}{\lambda}-\gamma\cdot f\left(x\right)\nonumber \\
= & -\alpha-c\frac{y-\beta}{\lambda}+\gamma\cdot\sup_{x}\, x\frac{y-\beta}{\lambda\gamma}-f\left(x\right)\nonumber \\
= & -\alpha-c\frac{y-\beta}{\lambda}+\gamma\cdot f^{*}\left(\frac{y-\beta}{\lambda\gamma}\right),\nonumber 
\end{align}
where we have replaced $x$ by $\frac{x-c}{\lambda}$ in \eqref{eq:drei}.\end{proof}

\end{document}